\theoremstyle{definition}
\newtheorem{definition}{Definition}[section]
\newtheorem{theorem}{Theorem}[section] 
\newtheorem{corollary}[theorem]{Corollary}
\newtheorem{proposition}[theorem]{Proposition}
\title{Banach spaces with polynomial numerical index 1} 
\author[H.~J.~Lee]{Han Ju Lee}
\address{Han Ju Lee\\ Mathematics Department\\
202 Mathematical Sciences Bldg\\ University of Missouri-Columbia\\
MO 65211 USA} \email{hahnju@postech.ac.kr}
\subjclass{46G25 (primary), 46B20, 46B22 (secondary)}
\thanks{This work was supported by the Korea Research Foundation
Grant funded by the Korean Government(MOEHRD) (KRF-2006-352-C00003)
and by grant No. R01-2004-000-10055-0 from the Basic Research
Program of the Korea Science \& Engineering Foundation.
}
\begin{document}
\maketitle

\begin{abstract}
We characterize Banach spaces with polynomial numerical index 1 when
they have the Radon-Nikod\'ym property. The holomorphic numerical
index is introduced and the characterization of the Banach space
with holomorphic numerical index 1 is obtained when it has the
Radon-Nikod\'ym property.
\end{abstract}

\section{Introduction} Let $X$ be a Banach space over a real or complex scalar field
$\mathbb{K}$ and $X^*$ its dual space. We denote by $\mathcal{L}(X)$
the Banach space of all bounded linear operators from $X$ to itself
with usual operator norm. We consider the topological subspace
$\Pi(X)=\{ (x, x^*): x^*(x)=1=\|x\|=\|x^*\|\}$ of the product space
$B_X\times B_{X^*}$, equipped with norm and weak-$*$ topology on
$B_X$,  the unit ball of $X$ and $B_{X^*}$ respectively. It is easy
to see that $\Pi(X)$ is a closed subspace of $B_X\times B_{X^*}$.
The {\bf numerical radius} $v(T)$ of a linear operator $T:X\to X$ is
defined by $v(T) = \sup \{ |x^*Tx|: (x, x^*)\in \Pi(X)\}$. We denote
by $n(X)$ the {\bf numerical index} of $X$ defined by $n(X)=\inf \{
v(T): T\in \mathcal{L}(X), \|T\|=1\}$.

Notice that $0\le n(X)\le 1$ and $n(X)\|T\|\le v(T)\le \|T\|$ for
any $T\in \mathcal{L}(X)$. Hence $v(\cdot )$ is equivalent to the
operator norm on $\mathcal{L}(X)$ when $n(X)>0$. For more properties
about the numerical radius and index, see \cite{KMP}. As in
\cite{H}, the notion of numerical radius can be extended to elements
in $C_b(B_X:X)$ of bounded continuous functions from $B_X$ to $X$.
More precisely, for each $f\in C_b(B_X:X)$, $v(f)= \sup\{
|x^*f(x)|:(x, x^*)\in \Pi(X)\}$. Notice that $C_b(B_X:Y)$ is a
Banach space equipped with sup norm $\|f\|=\sup\{ \|f(x)\|:x\in
B_X\}$.

Let $X$ and $Y$ be Banach spaces and $k\ge 1$. A bounded
$k$-homogeneous polynomial $P:X\to Y$ is defined to be
$P(x)=L(x,\dots, x)$ for all $x\in X$, where $L:X\times \cdots
\times X\to Y$ is a continuous $k$-multilinear map. We denote by
$\mathcal{P}({\ }^k X:Y)$ the Banach space of all bounded
$k$-homogenous polynomials from $X$ to $Y$ as a subspace of
$C_b(B_X:Y)$. Following the notations in \cite{CGKM}, the {\bf
$k$-polynomial numerical index} of a Banach space $X$ is defined to
be $n^{(k)}(X)= \inf \{ v(P): P\in \mathcal{P}({\ }^k X:X),
\|P\|=1\}$. It is easy to see that $n^{(k+1)}(X)\le n^{(k)}(X)$ for
any $k\ge 1$.

Let $X$ be a real or complex Banach space and let $K$ be a nonempty
convex subset of $X$. Recall that $x\in K$ is said to be an {\bf
extreme point} of $K$ if whenever $y+z=2x$ for some $y,z\in K$, we
have $x=y=z$. Denote by $ext(K)$ the set of all extreme points of
$K$.

The geometric properties of a Banach space $X$ with $n(X)=1$ have
been studied \cite{LMP,M,MG}. McGregor \cite{MG} gave a geometric
characterization of finite dimensional Banach spaces with numerical
index 1. More precisely, a finite dimensional Banach space $X$ has
numerical index 1 if and only if $|x^*(x)|=1$ for every $x\in
ext(B_X)$ and for every $x^*\in ext(B_{X^*})$. For a Banach space
$X$ with the Radon-Nikod\'ym property, it was shown \cite{LMP,M}
(cf. \cite[Corollary 1]{KMP}) that $n(X)=1$ if and only if
$|x^*(x)|=1$ for every $x^*\in ext(B_{X^*})$ and for every denting
point $x$ of $B_X$.  For the definition of denting point, see
\cite{DU}. In \cite{KMP}, they asked if there are similar
characterizations of the Banach space $X$ with $n^{(k)}(X)=1$ for
each $k\ge 2$. In this paper, we give the partial answer.

We need the following notion which is introduced by Ferrera \cite{F}
for the $k$-homogeneous polynomials.
\begin{definition}
Let $X$ and $Y$ be Banach spaces over $\mathbb{K}$. A nontrivial
function $f\in C_b(B_X: Y)$ is said to {\bf strongly attain its norm
} at $x_0$ if whenever $\lim_n \|f(x_n)\|=\|f\|$ for a sequence
$\{x_n\}$ in $B_X$, it has a subsequence $\{x_{n_k}\}_{k=1}^\infty$
converging to $\alpha x_0$ for some $|\alpha|=1$, $\alpha\in
\mathbb{K}$. Let $H$ be a subspace of $C_b(B_X:Y)$. Denote by
$\tilde\rho H$ the set
\[\tilde\rho H = \{ x_0 : f \mbox{  strongly attains its norm at } x_0 \
\mbox{and}\ \ f\in H\}.\] A nonzero function $f\in C_b(B_X:Y)$ is
said to be a {\bf strong peak function} at $x_0$ if whenever there
is a sequence $\{x_n\}_{n=1}^\infty$ in $B_X$ with $\lim_n
\|f(x_n)\|=\|f\|$, the sequence $\{x_n\}_n$ converges to $x_0$. We
 denote by $\rho H$ the set
\[ \rho H = \{ x_0 : f \mbox{ is a strong peak function in } H \mbox{ at }
x_0\}.\]
\end{definition}
Notice that $f\in C_b(B_X:Y) $ strongly attains its norm at $x$ if
and only if for each $\epsilon>0$, there is $\delta>0$ such that
whenever $\|f(y)\|\ge \|f\|-\delta$ for some $y\in B_X$, we get
$\inf_{\lambda\in S_\mathbb{K}} \|\lambda x - y\|\le \epsilon$.
Notice also that $\tilde\rho\mathcal{P}({\ }^k X:Y)
=\tilde\rho\mathcal{P}({\ }^k X)$ for each $k\ge 1$ if $Y$ is
nontrivial.

For two complex Banach spaces $X$ and $Y$, we are interested in two
 subspaces of $C_b(B_X:Y)$,
\begin{eqnarray*}
A_b(B_X: Y) &=& \{ f\in C_b(B_X: Y):
f\mbox{ is holomorphic on the open unit ball}~{B_X^\circ}\}\\
A_u(B_X: Y) &=& \{ f\in A_b(B_X: Y): f \mbox{ is uniformly
continuous on } B_X\}.
\end{eqnarray*} We denote by $A(B_X:Y)$ either $A_u(B_X:Y)$ or
$A_b(B_X:Y)$. When $Y=\mathbb{C}$, we write $A(B_X)$ instead of
$A(B_X:\mathbb{C})$. By the maximum modulus theorem, it is easy to
see that if $f\in A_b(B_X:Y)$ strongly attains its norm at $x_0$ and
$X$ is nontrivial, then $x_0$ is contained in $S_X$, the unit sphere
 of $X$.

\section{Main results}

Let $X$ and $Y$ be complex Banach spaces. It is shown (see the proof
of \cite[Theorem~4.4]{CLS}) that if $f\in A(B_X: Y)$ strongly
attains its norm at $x_0$. Then, given $\epsilon>0$, there is $g\in
A(B_X:Y)$ with $\|g\|<\epsilon$ and $f+g$ is a strong peak function
at $\alpha x_0$ for some $\alpha\in S_\mathbb{C}$. This implies that
$x_0$ is a strong peak point of $A(B_X:Y)$. So it is easy to see
that $\rho A(B_X)=\rho A(B_X:Y) = \tilde\rho A(B_X:Y) = \tilde\rho
A(B_X)$.  Recall that $x\in B_X$ is said to be a {\bf complex
extreme point} of $B_X$ if $\sup_{0\le \theta\le 2\pi} \|x+
e^{i\theta} y\| \le 1$ for some $y\in X$ implies $y=0$. We denote by
$ext_\mathbb{C}(B_X)$ the set of all complex extreme points of
$B_X$. Notice that every strong peak point of $A(B_X)$ is a complex
extreme point of $B_X$ (see \cite{Gl}). So we have the following.
\begin{proposition}Let $X$ and $Y$ be complex Banach spaces and
$H$ a subspace of $A(B_X: Y)$. Then
\[ \tilde\rho H \subset \rho A(B_X:Y) \subset ext_\mathbb{C}(B_X).\]
\end{proposition}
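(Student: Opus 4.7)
The plan is to prove the two inclusions separately, drawing entirely on the facts already assembled in the paragraph just before the statement: the perturbation result behind \cite[Theorem~4.4]{CLS} for the first inclusion, and Globevnik's characterization \cite{Gl} of strong peak points for the second.

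For the first inclusion $\tilde\rho H \subset \rho A(B_X:Y)$, since $H$ is a subspace of $A(B_X:Y)$, the trivial containment $\tilde\rho H \subset \tilde\rho A(B_X:Y)$ reduces the task to showing $\tilde\rho A(B_X:Y) \subset \rho A(B_X:Y)$. Given $f \in A(B_X:Y)$ that strongly attains its norm at $x_0$, I would apply the cited \cite{CLS} perturbation to produce, for small $\epsilon$, some $g \in A(B_X:Y)$ with $\|g\| < \epsilon$ such that $f+g$ strongly peaks at $\alpha x_0$ for some $\alpha \in S_{\mathbb{C}}$. The $\alpha$ is then absorbed by composing with the isometric holomorphic automorphism $x \mapsto \alpha x$ of $B_X$, producing a strong peak function in $A(B_X:Y)$ at $x_0$ itself; hence $x_0 \in \rho A(B_X:Y)$.

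For the second inclusion $\rho A(B_X:Y) \subset ext_{\mathbb{C}}(B_X)$, the idea is to reduce to the scalar case, where \cite{Gl} applies directly. Given a strong peak function $f \in A(B_X:Y)$ at $x_0$, any norming sequence converges to $x_0$ by hypothesis, so continuity gives $\|f(x_0)\| = \|f\|$. Choose $y^* \in S_{Y^*}$ with $y^*(f(x_0)) = \|f(x_0)\|$ and form $y^* \circ f \in A(B_X)$. Then $\|y^* \circ f\| = \|f\|$, and the chain $|y^*(f(x_n))| \le \|f(x_n)\| \le \|f\|$ forces any norming sequence of $y^* \circ f$ to be a norming sequence of $f$; therefore $x_n \to x_0$, so $y^* \circ f$ is a strong peak function in $A(B_X)$ at $x_0$. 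Globevnik's result then yields $x_0 \in ext_{\mathbb{C}}(B_X)$.

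The main obstacle is really just bookkeeping: absorbing the unimodular scalar $\alpha$ coming from \cite{CLS} through a rotation argument, and selecting the right norming functional $y^*$ in the vector-to-scalar reduction. Both steps use only that $|\alpha|=1$ and the Hahn--Banach theorem on $Y$, so the proof amounts to stitching the preceding discussion together.
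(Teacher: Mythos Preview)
Your proposal is correct and follows essentially the same approach as the paper, which presents the proposition as an immediate consequence of the preceding paragraph (the \cite{CLS} perturbation for the first inclusion and Globevnik's result \cite{Gl} for the second). You have simply made explicit the two bookkeeping steps the paper leaves to the reader: the rotation absorbing the scalar $\alpha$, and the composition with a norming $y^*\in S_{Y^*}$ to pass from $\rho A(B_X:Y)$ to $\rho A(B_X)$.
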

It is worth while to remark that when $X$ is a nontrivial finite
dimensional complex Banach space, then $\rho A(B_X)=
ext_\mathbb{C}(B_X)$ (see \cite{CHL}).

Let $X$ be a real or complex Banach space and let $K$ be a nonempty
convex subset of $X$. Recall that an element $x$ in $K$ is said to
be a {\bf strongly exposed point} of $K$ if there is nonzero $x^*\in
B_{X^*}$ such that ${\rm Re\ }x^*(x)= \sup \{ {\rm Re\ }x^*(y):y\in
K\}$ and whenever $\lim_n {\rm Re\ } x^*(x_n)= {\rm Re\ }x^*(x)$ for
some sequence $\{x_n\}_{n=1}^\infty$ in $K$, we get $\lim_n \|x_n -
x\|=0$. Notice that $\tilde\rho \mathcal{P}({\ }^1 X)$ is
$sexp(B_X)$, the set of all strongly exposed points of $B_X$. Hence
we get the following corollary.

\begin{corollary}
Let $X$ be a real or complex Banach space. Then
\[ sexp(B_X)= \tilde\rho \mathcal{P}({\ }^1 X)\subset \tilde\rho \mathcal{P}({\ }^2 X)
\subset \dots \] In particular, when $X$ is a complex space,
\[\bigcup_{k=1}^\infty \tilde\rho \mathcal{P}({\ }^k X) \subset \rho
A_u(B_X) \subset \rho A_b(B_X) \subset ext_\mathbb{C}(B_X).\]
\end{corollary}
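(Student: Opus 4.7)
The plan is to verify the three separate claims of the corollary: the identity $sexp(B_X)=\tilde\rho\mathcal{P}({\ }^1 X)$, the ascending chain $\tilde\rho\mathcal{P}({\ }^k X)\subset\tilde\rho\mathcal{P}({\ }^{k+1}X)$, and, in the complex case, the three-term chain descending to $ext_\mathbb{C}(B_X)$.

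For the first identity, I would identify $\mathcal{P}({\ }^1 X)$ with $X^*$ and unwind the two definitions in parallel. In one direction, if $x_0$ is strongly exposed by $x^*$ (so necessarily $x^*(x_0)=\|x^*\|$ after observing $\|x_0\|=1$), a sequence with $|x^*(x_n)|\to\|x^*\|$ can be rotated by the unimodular scalars $\beta_n=\overline{x^*(x_n)}/|x^*(x_n)|$ so that $x^*(\beta_n x_n)=|x^*(x_n)|\to\|x^*\|$; strong exposition then gives $\beta_n x_n\to x_0$, and extracting a convergent sub-subsequence of $\{\beta_n\}\subset S_\mathbb{K}$ produces $x_{n_k}\to\bar\beta x_0$ with $|\bar\beta|=1$. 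In the reverse direction, if $x^*$ strongly attains its norm at $x_0$, plugging a norming sequence into the definition first forces $|x^*(x_0)|=\|x^*\|$; after rotating $x^*$ so that $x^*(x_0)=\|x^*\|$, any sequence with ${\rm Re\ }x^*(x_n)\to\|x^*\|$ also satisfies $|x^*(x_n)|\to\|x^*\|$, the scalar $\alpha$ from the definition is pinned down to $\alpha=1$ by continuity, and the standard subsequence-of-subsequence argument upgrades subsequential convergence to $x_n\to x_0$.

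Next I would prove $\tilde\rho\mathcal{P}({\ }^k X)\subset\tilde\rho\mathcal{P}({\ }^{k+1}X)$ by a multiplication trick. If $P\in\mathcal{P}({\ }^k X)$ strongly attains its norm at $x_0$, homogeneity gives $\|x_0\|=1$, so Hahn--Banach supplies $x^*\in X^*$ with $x^*(x_0)=1=\|x^*\|$, and $Q:=P\cdot x^*\in\mathcal{P}({\ }^{k+1}X)$ satisfies $\|Q\|=\|P\|=|Q(x_0)|$. If $|Q(x_n)|\to\|Q\|$, then the factor bounds $|P(x_n)|\le\|P\|$ and $|x^*(x_n)|\le 1$ must both saturate, so in particular $|P(x_n)|\to\|P\|$, and the strong norm attainment of $P$ supplies the required subsequence $x_{n_k}\to\alpha x_0$.

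Finally, for the complex chain the first inclusion $\bigcup_k\tilde\rho\mathcal{P}({\ }^k X)\subset\rho A_u(B_X)$ is the conjunction of $\mathcal{P}({\ }^k X)\subset A_u(B_X)$ (bounded $k$-homogeneous polynomials are Lipschitz on $B_X$) with the identity $\tilde\rho A_u(B_X)=\rho A_u(B_X)$ established in the opening paragraphs of Section~2. The middle inclusion $\rho A_u(B_X)\subset\rho A_b(B_X)$ is immediate from $A_u(B_X)\subset A_b(B_X)$, since the property of being a strong peak function at a given point is intrinsic to the function. The last inclusion $\rho A_b(B_X)\subset ext_\mathbb{C}(B_X)$ follows directly from Proposition~2.1 with $H=A_b(B_X)$ and $Y=\mathbb{C}$. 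I expect the only step that calls for genuine thought to be the multiplication trick in the middle claim, where the delicate point is verifying that saturation of the product $|Q(x_n)|=|P(x_n)||x^*(x_n)|$ forces saturation of each factor; everything else is definitional or a direct appeal to the preceding proposition.
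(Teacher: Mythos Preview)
Your proposal is correct and mirrors the paper's argument: the paper proves only the chain inclusion $\tilde\rho\mathcal{P}({\ }^k X)\subset\tilde\rho\mathcal{P}({\ }^{k+1}X)$ via the same multiplication trick $Q=y^*\cdot P$ (leaving the saturation step you spell out as ``easy to see''), while treating the identity $sexp(B_X)=\tilde\rho\mathcal{P}({\ }^1 X)$ and the complex chain as already established in the text preceding the corollary and in Proposition~2.1. Your version simply fills in the details the paper omits.
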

\begin{proof}
We may assume that $X\neq 0$ and need show that $\tilde\rho
\mathcal{P}({\ }^k X)\subset \tilde\rho \mathcal{P}({\ }^{k+1} X)$
for each $k\ge 1$. Suppose that $y\in \tilde\rho \mathcal{P}({\ }^k
X)$. So $y\in S_X$ and there is  $P\in  \mathcal{P}({\ }^k X)$ which
strongly attains its norm at $y$. Choose $y^*\in X^*$ such that
$\|y^*\|=y^*(y)=1$. Define $Q:X\to \mathbb{K}$ by $Q(x) =
y^*(x)P(x)$ for each $x\in X$. So $Q$ is a $(k+1)$-homogeneous
polynomial and it is easy to see that $Q$ strongly attains its norm
at $y$. The proof is complete.
\end{proof}

Recall that a Banach space $X$ is said to have the {\bf
Radon-Nikod\'ym property} if every nonempty bounded closed convex
subset in $X$ is a closed convex hull of its strongly exposed points
\cite{DU}. Reviewing the proof of Theorem~4.2 in \cite{CLS} (cf.
\cite{F}), we get the following.

\begin{proposition}\label{strongly}
Let $X, Y$ be Banach spaces over $\mathbb{K}$. Suppose that $X$ has
the Radon-Nikod\'ym property and $Y$ is nontrivial. Then for each
$P\in \mathcal{P}({\ }^k X:Y)$ and $\epsilon>0$, there is $Q\in
\mathcal{P}({\ }^k X:Y)$ such that $\|Q\|\le \epsilon$ and $P+Q$
strongly attains its norm.
\end{proposition}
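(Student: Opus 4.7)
My plan is to combine Stegall's variational principle (applicable because $X$ has the Radon-Nikod\'ym property) with a carefully chosen $k$-homogeneous perturbation. We may assume $P\neq 0$. Consider the bounded continuous function $\phi:B_{X}\to[0,\infty)$ defined by $\phi(x)=\|P(x)\|$. Since $B_X$ is a closed bounded convex subset of a Banach space with the RNP, Stegall's variational principle supplies, for any preassigned $\delta>0$, a linear functional $x^{*}\in X^{*}$ with $\|x^{*}\|<\delta$ such that $x\mapsto\phi(x)+\text{Re}\,x^{*}(x)$ attains a strong maximum on $B_{X}$ at some point $x_{0}$; that is, every maximizing sequence for $\phi+\text{Re}\,x^{*}$ converges to $x_{0}$.

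Next I would convert this scalar linear perturbation into a vector-valued $k$-homogeneous polynomial perturbation. Using Hahn--Banach, choose $\ell\in X^{*}$ with $\ell(x_{0})=\|\ell\|=1$; using the nontriviality of $Y$, pick a unit vector $y_{0}\in Y$ and $y_{0}^{*}\in S_{Y^{*}}$ with $y_{0}^{*}(y_{0})=1$, arranged so that the phase of $y_{0}^{*}(P(x_{0}))$ equals $|y_{0}^{*}(P(x_{0}))|=\|P(x_{0})\|$. Then set
\[
Q(x)=x^{*}(x)\,\ell(x)^{k-1}\,y_{0},
\]
which is an element of $\mathcal{P}({\ }^{k}X:Y)$ satisfying $\|Q\|\le\|x^{*}\|\,\|\ell\|^{k-1}\,\|y_{0}\|\le\delta$. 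Taking $\delta\le\epsilon$ achieves the bound $\|Q\|\le\epsilon$.

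The decisive step is to verify that $P+Q$ strongly attains its norm. Given $\{x_{n}\}\subset B_{X}$ with $\|(P+Q)(x_{n})\|\to\|P+Q\|$, I would pass to a subsequence (and absorb a unimodular factor) to align the phase of $(P+Q)(x_{n})$ with that of $(P+Q)(x_{0})$. Testing against $y_{0}^{*}$ gives
\[
\|(P+Q)(x_{n})\|\ge \text{Re}\,y_{0}^{*}\bigl((P+Q)(x_{n})\bigr)=\text{Re}\,y_{0}^{*}(P(x_{n}))+\text{Re}\,x^{*}(x_{n})\,\ell(x_{n})^{k-1},
\]
while the triangle inequality yields the matching upper bound $\phi(x_{n})+|x^{*}(x_{n})|\,|\ell(x_{n})|^{k-1}$. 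Comparing both bounds at $x_{n}$ and at $x_{0}$ shows that $\{x_{n}\}$ must also be a maximizing sequence for $\phi+\text{Re}\,x^{*}$, and Stegall's strong-maximum property then forces $x_{n}\to\alpha x_{0}$ with $|\alpha|=1$, which is strong norm attainment in the sense of the introduction.

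The main obstacle is precisely this last calibration: Stegall's theorem delivers only a \emph{scalar linear} perturbation, whereas we need a \emph{vector-valued $k$-homogeneous} perturbation, and the two quantities $\|(P+Q)(x)\|$ and $\phi(x)+\text{Re}\,x^{*}(x)$ must be compared up to errors that vanish in the limit. Choosing $\ell$ and $y_{0}$ so that all the ingredients $P(x_{0})$, $\ell(x_{0})$, $x^{*}(x_{0})$, $y_{0}$ combine constructively at $x_{0}$, and controlling $\delta$ so that near-maximizers for $\|P+Q\|$ are genuine near-maximizers for $\phi+\text{Re}\,x^{*}$, is where the real technical work lies.
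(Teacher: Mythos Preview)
The paper does not prove this proposition; it merely points to Theorem~4.2 of [CLS] and to Ferrera [F], whose engine is precisely Stegall's variational principle applied to $\phi(x)=\|P(x)\|$ on $B_X$. So your strategy coincides with the intended one.

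Your proposal, however, stops exactly at the point that matters, and the displayed lower bound via $y_0^{*}$ is not the right tool: $\mathrm{Re}\,y_0^{*}(P(x_n))$ need not be close to $\|P(x_n)\|$, and in the complex case $\mathrm{Re}\bigl(x^{*}(x_n)\ell(x_n)^{k-1}\bigr)$ is not the product $\mathrm{Re}\,x^{*}(x_n)\cdot\ell(x_n)^{k-1}$. The gap closes cleanly once you use only the \emph{upper} bound together with the $S_{\mathbb{K}}$-invariance of $\phi$. Since $\phi(\lambda x)=\phi(x)$ for all $|\lambda|=1$, one has
\[
\sup_{x\in B_X}\bigl(\phi(x)+|x^{*}(x)|\bigr)=\sup_{x\in B_X}\bigl(\phi(x)+\mathrm{Re}\,x^{*}(x)\bigr)=:M,
\]
and evaluating at $x_0$ forces $x^{*}(x_0)\ge 0$ to be real. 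Taking $y_0=P(x_0)/\|P(x_0)\|$ (legitimate since $\|P(x_0)\|\ge\|P\|-2\delta>0$ for small $\delta$) gives $\|(P+Q)(x_0)\|=\|P(x_0)\|+x^{*}(x_0)=M$, while for every $x\in B_X$ the triangle inequality and $|\ell(x)|\le 1$ yield $\|(P+Q)(x)\|\le\phi(x)+|x^{*}(x)|\le M$. Hence $\|P+Q\|=M$, and any sequence with $\|(P+Q)(x_n)\|\to M$ automatically satisfies $\phi(x_n)+|x^{*}(x_n)|\to M$; rotating each $x_n$ by a unimodular $\lambda_n$ so that $x^{*}(\lambda_n x_n)=|x^{*}(x_n)|$ produces a maximizing sequence for $\phi+\mathrm{Re}\,x^{*}$, which Stegall forces to converge to $x_0$. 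A subsequence with $\lambda_n\to\lambda$ then gives $x_n\to\bar\lambda x_0$. The functional $y_0^{*}$ is unnecessary.
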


Now we get the polynomial version of Bishop's theorem \cite{B,CLS}.
Recall that a subset $A$ of $B_X$ is said to be {\bf balanced } if
$\lambda\in S_\mathbb{K}$ implies  $\lambda A \subset A$.

\begin{proposition}Let $k$ be a positive integer and $X, Y$ Banach
spaces over $\mathbb{K}$. Suppose that $X$ has the Radon-Nikod\'ym
property and $Y$ is nontrivial. Then the set $\tilde\rho
\mathcal{P}({\ }^k X)$ is a norming subset of $\mathcal{P}({\ }^k
X:Y)$. In fact, the closure of $\tilde\rho \mathcal{P}({\ }^k X)$ is
the smallest closed balanced norming subset of $ \mathcal{P}({\ }^k
X:Y)$.
\end{proposition}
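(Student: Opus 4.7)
The plan is to obtain everything from Proposition~\ref{strongly} together with two easy facts already in the introduction: the identification $\tilde\rho\mathcal{P}({\ }^k X:Y)=\tilde\rho\mathcal{P}({\ }^k X)$, valid since $Y$ is nontrivial, and the observation that the strong-attainment condition is invariant when $x_0$ is replaced by a unimodular multiple.

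First, I would show $\tilde\rho\mathcal{P}({\ }^k X)$ norms $\mathcal{P}({\ }^k X:Y)$. Given $P\in\mathcal{P}({\ }^k X:Y)$ and $\epsilon>0$, Proposition~\ref{strongly} yields $Q\in\mathcal{P}({\ }^k X:Y)$ with $\|Q\|\le\epsilon$ such that $P+Q$ strongly attains its norm at some $x_0$; then $x_0\in\tilde\rho\mathcal{P}({\ }^k X:Y)=\tilde\rho\mathcal{P}({\ }^k X)$ and the triangle inequality gives
\[ \|P(x_0)\|\ge\|(P+Q)(x_0)\|-\|Q(x_0)\|\ge\|P+Q\|-\epsilon\ge\|P\|-2\epsilon. \]
Letting $\epsilon\downarrow 0$ shows $\sup\{\|P(x)\|:x\in\tilde\rho\mathcal{P}({\ }^k X)\}=\|P\|$, so $\tilde\rho\mathcal{P}({\ }^k X)$, and thus its closure, is norming.

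Next, I would note that $\tilde\rho\mathcal{P}({\ }^k X)$ is balanced straight from the definition: if $x_{n_k}\to\alpha x_0$ with $|\alpha|=1$, then for $\lambda\in S_\mathbb{K}$ we may write $x_{n_k}\to(\alpha\bar\lambda)(\lambda x_0)$, witnessing strong attainment at $\lambda x_0$. Closure preserves both balance and the norming property, so $\overline{\tilde\rho\mathcal{P}({\ }^k X)}$ is a closed balanced norming subset.

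For minimality, let $B\subset B_X$ be any closed balanced norming subset for $\mathcal{P}({\ }^k X:Y)$, and fix $x_0\in\tilde\rho\mathcal{P}({\ }^k X)=\tilde\rho\mathcal{P}({\ }^k X:Y)$. Choose $P\in\mathcal{P}({\ }^k X:Y)$ strongly attaining its norm at $x_0$. Since $B$ norms $P$, there is $\{y_n\}\subset B$ with $\|P(y_n)\|\to\|P\|$; strong attainment supplies a subsequence $y_{n_k}\to\alpha x_0$ with $|\alpha|=1$. Closedness of $B$ gives $\alpha x_0\in B$, and balance then gives $x_0=\bar\alpha(\alpha x_0)\in B$. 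Hence $\tilde\rho\mathcal{P}({\ }^k X)\subset B$, and taking closure of the left-hand side finishes the proof. No step is a serious obstacle here: Proposition~\ref{strongly} does all the analytic work (Bishop-type perturbation using the Radon--Nikod\'ym property), and the only point requiring brief care is keeping straight the scalar-valued versus $Y$-valued distinction, which is precisely what the identification $\tilde\rho\mathcal{P}({\ }^k X:Y)=\tilde\rho\mathcal{P}({\ }^k X)$ handles.
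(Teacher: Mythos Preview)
Your proof is correct and follows essentially the same approach as the paper: use Proposition~\ref{strongly} and the triangle inequality to show $\tilde\rho\mathcal{P}({\ }^k X)$ is norming, then for minimality pick a strongly norm-attaining $P$ at $x_0$, use that the candidate set $B$ is norming to extract a maximizing sequence, and apply strong attainment plus closedness and balance to get $x_0\in B$. The only cosmetic difference is that the paper takes a whole sequence $P_n\to P$ of strongly norm-attaining polynomials rather than a single $\epsilon$-perturbation, but this is the same idea.
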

\begin{proof}By Proposition~\ref{strongly}, for each $P\in \mathcal{P}({\ }^k X:Y)$, there is a sequence
$\{P_n\}_{n=1}^\infty$ in $\mathcal{P}({\ }^k X:Y)$ such that
$\lim_n \|P_n-P\|=0$ and each $P_n$ strongly attains its norm at
$x_n$. So $\|P_n\|= \|P_n(x_n)\|$. Then for each $n\ge 1$,
\[ \|P_n(x_n)\|-\|P_n - P\|\le \|P(x_n)\|\le \|P_n(x_n)\|+\|P_n
-P\|\] holds. So $\lim_n \|P(x_n)\|=\lim_n \|P_n(x_n)\|= \lim_n
\|P_n\|= \|P\|.$ This shows that $\tilde\rho \mathcal{P}({\ }^k X)$
is a norming subset of $ \mathcal{P}({\ }^k X:Y)$. So it is clear
that the closure of $\tilde\rho \mathcal{P}({\ }^k X)$ is a closed
norming balanced subset of $S_X$. Suppose that $A$ is a closed
balanced norming subset of $ \mathcal{P}({\ }^k X:Y)$. Let $P$ be a
strongly norm-attaining element at $x_0$. Since $A$ is norming,
choose a sequence $\{x_n\}_{n=1}^\infty$ such that $\|P\|=\lim_n
\|P(x_n)\|$. Then we get a subsequence $\{y_l\}_{l=1}^\infty$ of
$\{x_n\}_{n=1}^\infty$ and $\lambda\in S_\mathbb{K}$ such that
$\lim_l y_l = \lambda x_0$. So $x_0$ is contained in $A$. This shows
that the closure of  $\tilde\rho \mathcal{P}({\ }^k X)$ is contained
in $A$. This completes the proof.
\end{proof}

The following Proposition~\ref{prop:indexonepeakpoint11} and
Theorem~\ref{prop:rnp1} are related with Problem~45 in \cite{KMP}.
Similar characterizations were shown in \cite{LMP,M,MG} for Banach
spaces with the numerical index 1. Recall that a point $x^*\in
B_{X^*}$ is called a {\bf weak-$*$ exposed point} of $B_{X^*}$ if
there is $x\in B_X$ such that $x^*(x)=1$ and $y^*(x)=1$ for some
$y^*\in B_{X^*}$ implies $y^*=x^*$. The corresponding point $x$ is
said to be a {\bf smooth point} of $S_X$. We denote by
$w^*exp(B_{X^*})$ the set of all weak-$*$ exposed points of
$B_{X^*}$. In the proof of Theorem~\ref{prop:rnp1}, we use the Mazur
theorem which says that if a Banach space $X$ is separable, then the
set of all smooth points of $S_X$ is dense in $S_X$.

\begin{proposition}\label{prop:indexonepeakpoint11}Let $k\ge 1$ be a
positive integer and $X$  a real or complex Banach space with
$n^{(k)}(X)=1$. Then $|x^*(x)|=1$ for each $x\in
\tilde\rho\mathcal{P}({\ }^k X)$ and $x^*\in ext(B_{X^*})$.
\end{proposition}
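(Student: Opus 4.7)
\emph{Plan.} Fix $x \in \tilde\rho\mathcal{P}({\ }^k X)$ and choose a norm-one $p \in \mathcal{P}({\ }^k X)$ strongly attaining its norm at $x$; note that $\|x\|=1$. The strategy is to feed the hypothesis $n^{(k)}(X)=1$ the one-parameter family of ``rank-one'' polynomials $P_y(z) := p(z)\,y \in \mathcal{P}({\ }^k X : X)$, indexed by $y \in X$, and then promote the resulting norming information from $B_{X^*}$ to its extreme points by a bipolar-plus-Milman argument.

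Since $\|P_y\| = \|p\|\,\|y\| = \|y\|$, the hypothesis gives $v(P_y) = \|y\|$, so there exist $(z_n, z_n^*) \in \Pi(X)$ with $|p(z_n)|\,|z_n^*(y)| \to \|y\|$. Because $|p(z_n)| \le 1$ and $|z_n^*(y)| \le \|y\|$, this forces both $|p(z_n)| \to 1$ and $|z_n^*(y)| \to \|y\|$. Strong norm-attainment of $p$ at $x$ extracts a subsequence with $z_{n_j} \to \lambda x$ for some $|\lambda|=1$; combined with $z_{n_j}^*(z_{n_j})=1$ and $\|z_{n_j}^*\|\le 1$, this yields $z_{n_j}^*(x) \to \bar\lambda$. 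Hence any weak-$*$ cluster point $w^*$ of $\{z_{n_j}^*\}$ satisfies both $|w^*(x)| = 1$ and $|w^*(y)| = \|y\|$.

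Set $F := \{w^* \in B_{X^*} : |w^*(x)| = 1\}$. The previous step shows that $F$ norms $X$, and $F$ is visibly balanced and weak-$*$ closed. Since the convex hull of a balanced set is balanced, the bipolar theorem gives that the weak-$*$ closed convex hull of $F$ is all of $B_{X^*}$, and Milman's theorem then yields $ext(B_{X^*}) \subset \overline{F}^{\,w^*} = F$. This is precisely the assertion $|x^*(x)|=1$ for every $x^* \in ext(B_{X^*})$.

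\emph{Main obstacle.} The decisive step is the choice of the auxiliary polynomial $P_y$: it couples the strong attainment of $p$ at $x$ with an arbitrary test direction $y$, so that saturating the numerical radius simultaneously pins the resulting functionals at modulus one on $x$ and realizes $\|y\|$ on $y$. Once this norming family $F$ is produced, the conclusion is a routine three-step combination of weak-$*$ compactness, the bipolar theorem, and Milman's theorem.
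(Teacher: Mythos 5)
Your proof is correct, but its organization is genuinely different from the paper's. The paper fixes one extreme point $x_0^*\in ext(B_{X^*})$ and an $\epsilon>0$, shows via the converse Krein--Milman theorem that $x_0^*$ does not lie in the weak-$*$ closed convex hull of $\{x^*\in B_{X^*}:|(x^*-x_0^*)(x_0)|\ge\epsilon\}$, and uses Hahn--Banach separation to manufacture a single test direction $y\in S_X$ tailored to $x_0^*$; it then applies the hypothesis to the one polynomial $P(\cdot)y$ and runs a quantitative $\epsilon$-$\delta$ estimate to get $\bigl|1-|x_0^*(x_0)|\bigr|\le 2\epsilon$. You instead apply the hypothesis to the whole family $P_y=p(\cdot)\,y$, $y\in X$, use the strong attainment of $p$ at $x$ exactly as the paper does to pin near-maximizers at $\lambda x$ (hence $|w^*(x)|=1$ at weak-$*$ cluster points, while $|w^*(y)|=\|y\|$), and conclude that the weak-$*$ compact balanced set $F=\{w^*\in B_{X^*}:|w^*(x)|=1\}$ is norming; the bipolar theorem then gives $\overline{co}^{\,w^*}(F)=B_{X^*}$ and Milman's theorem places every extreme point of $B_{X^*}$ in $F$ simultaneously. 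Both arguments rest on the same two ingredients (the rank-one polynomials $P(\cdot)y$ together with strong norm-attainment, and Milman's theorem), but your decomposition treats all extreme points at once and trades the paper's separation-plus-$\epsilon$ bookkeeping for a soft limit argument via cluster points, while the paper's version works with a single, explicitly constructed direction $y$ and makes the quantitative mechanism visible. If you write yours up, state the trivial case $y=0$, note that $F\neq\emptyset$ and balanced so that $0\in co(F)$ and the bipolar coincides with the weak-$*$ closed convex hull, and justify $\|P_y\|=\|y\|$ and $v(P_y)=\|P_y\|$ (the latter by homogeneity of $v$ from the definition of $n^{(k)}(X)=1$); these are all routine.
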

\begin{proof}
Suppose that $n^{(k)}(X)=1$. Fix $x_0^*\in ext(B_{X^*})$ and take
$P$ which strongly attains its norm at $x_0$ with $\|P\|=1$. We need
show that $|x_0^*(x_0)|=1$.

Fix $\epsilon>0$. Because $P$ strongly attains its norm at $x_0$,
there is $\alpha>0$ such that $|P(y)|\ge 1-\alpha$ for some $y\in
B_X$ implies $\inf_{\lambda\in S_\mathbb{K}}\|\lambda x - y\|\le
\epsilon$.

Set $F= \{ x^*\in B_{X^*} : |(x^*-x^*_0)(x_0)|\ge \epsilon\}$. Then
$F$ is weak-$*$ compact and $x^*_0$ is not contained in
$\overline{co}^{\,*}(F)$, the weak-$*$ closure of convex hull of
$F$. Otherwise, $x^*_0$ is an extreme point of
$\overline{co}^{\,*}(F)$ and $x_0^*\in F$ by the converse
Krein-Milman theorem. Hence there exist $y\in S_X$ and $\beta>0$
such that
\[ {\rm Re\ } x_0^*(y) > 1-\beta \ge {\rm Re\ } x^*(y), \ \ \
\forall x^*\in F.\] That is,  if ${\rm Re\ } x^*(y)>1-\beta$ for
some $x^*\in B_{X^*}$, then $|(x^*-x_0^*)(x_0)|<\epsilon$.

Let $f(x) = P(x)y$ for each $x\in X$. Then $f\in \mathcal{P}({\ }^k
X:X)$ and $v(f)= \|f\|=1$. Take $\delta = \min\{\alpha, \beta\}$.
Then there is $(x,x^*)\in \Pi(X)$ such that $|x^*f(x)|>1-\delta$.
This means that  $|P(x)|>1-\alpha$ and $|x^*(y)|= {\rm Re\ }
(\lambda_1 x^*)(y) > 1-\beta$ for some $\lambda_1\in S_\mathbb{K}$.
Hence $\inf_{\lambda\in S_\mathbb{K}} \|\lambda x_0 - x\| =
\|\lambda_2 x_0 - x\|\le \epsilon$ for some $\lambda_2\in
S_\mathbb{K}$ and $|(\lambda_1 x^*-x_0^*)(x_0)|<\epsilon$. By the
triangle inequality,
\begin{eqnarray*} \big| |x^*(x_0)| -|x^*(x)|\big| &\le&
|x^*(\lambda_2 x_0 - x)| \le \|\lambda_2 x_0 -x
\|\le \epsilon\ \ \mbox{and}\\
\big| |x^*(x_0)| - |x_0^*(x_0)| \big| &\le& |(\lambda_1
x^*-x_0^*)(x_0)| \ \le \epsilon. \end{eqnarray*} Notice that
$x^*(x)=1$. So $\big| 1- |x_0^*(x_0)| \big| \le 2\epsilon$.

Because $\epsilon>0$ is arbitrary, $|x_0^*(x_0)|=1$. This completes
the proof.
\end{proof}

\begin{theorem}\label{prop:rnp1}
Suppose that $k$ is a positive integer and $X$ is a nontrivial real
or complex Banach space with the Radon-Nikod\'ym property. The
following are equivalent.
\begin{enumerate}
\item \label{enu:numericalone31} $n^{(k)}(X)=1$.

\item \label{enu:numericalone41}  $|x^*(x)|=1$ for each $x\in \tilde\rho\mathcal{P}({\ }^k X)$ and $x^*\in
ext(B_{X^*})$.
\end{enumerate}
In addition, when $X$ is separable, $n^{(k)}(X)=1$ if and only if
$|x^*(x)|=1$ for each $x\in \tilde\rho\mathcal{P}({\ }^k X)$ and
$x^*\in w^*exp(B_{X^*})$.
\end{theorem}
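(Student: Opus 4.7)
\emph{Plan.} The implication $(1)\Rightarrow(2)$ is exactly Proposition~\ref{prop:indexonepeakpoint11}, so the substance of the theorem is the reverse implication together with the separable addendum. For $(2)\Rightarrow(1)$, I fix $P\in\mathcal{P}({\ }^k X:X)$ with $\|P\|=1$ and $\epsilon>0$, and use Proposition~\ref{strongly} to produce $Q\in\mathcal{P}({\ }^k X:X)$ with $\|Q\|<\epsilon$ such that $P+Q$ strongly attains its norm at some $x_0\in \tilde\rho\mathcal{P}({\ }^k X)\subset S_X$. Applying Bauer's maximum principle to the weak-$*$ continuous linear functional $x^*\mapsto \operatorname{Re} x^*((P+Q)(x_0))$ on the weak-$*$ compact convex set $B_{X^*}$ yields $x^*\in ext(B_{X^*})$ with $x^*((P+Q)(x_0))=\|(P+Q)(x_0)\|=\|P+Q\|$.

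The hypothesis $(2)$ now forces $|x^*(x_0)|=1$, so I can choose $\lambda\in S_\mathbb{K}$ with $(\lambda x_0, x^*)\in\Pi(X)$. Using $k$-homogeneity,
\[
v(P+Q)\ge |x^*((P+Q)(\lambda x_0))| = |\lambda|^k\,|x^*((P+Q)(x_0))| = \|P+Q\|,
\]
and hence $v(P+Q)=\|P+Q\|$. Since $v$ is a seminorm dominated by $\|\cdot\|$, this gives $v(P)\ge v(P+Q)-v(Q)\ge \|P+Q\|-\|Q\|\ge 1-2\epsilon$; letting $\epsilon\to 0$ shows $v(P)=\|P\|$ and hence $n^{(k)}(X)=1$.

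The forward direction of the separable addendum is immediate from Proposition~\ref{prop:indexonepeakpoint11} since every weak-$*$ exposed point is extreme. Its converse is the main obstacle: Bauer's principle produces only an extreme point of $B_{X^*}$, which in general need not be weak-$*$ exposed. My plan is to invoke Mazur's theorem in its place. Locate $x_0$ as before, then choose a sequence of smooth points $z_n\in S_X$ converging to $(P+Q)(x_0)/\|(P+Q)(x_0)\|$; the unique norming functional $\phi_n$ of $z_n$ lies in $w^*exp(B_{X^*})$, and a standard triangle-inequality estimate yields $|\phi_n((P+Q)(x_0))|\to\|P+Q\|$. The hypothesis now gives $|\phi_n(x_0)|=1$ for every $n$, so there exist $\lambda_n\in S_\mathbb{K}$ with $(\lambda_n x_0,\phi_n)\in\Pi(X)$, and $v(P+Q)\ge |\phi_n((P+Q)(\lambda_n x_0))|\to\|P+Q\|$; the seminorm estimate then concludes as in the first part.
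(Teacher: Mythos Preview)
Your argument is correct and follows essentially the same route as the paper: the paper phrases the reduction slightly differently (it observes that, by Proposition~\ref{strongly}, it suffices to prove $v(f)=\|f\|$ for every strongly norm-attaining $f\in\mathcal{P}({\ }^kX:X)$, rather than carrying out the explicit $\epsilon$-perturbation and seminorm estimate), and it obtains the extreme functional via Krein--Milman applied to the face $\{y^*\in B_{X^*}:y^*f(x)=\|f(x)\|\}$ rather than Bauer's principle, but these are cosmetic variants of the same idea. The separable addendum is handled identically, approximating $f(x)/\|f(x)\|$ by smooth points and using their weak-$*$ exposing functionals.
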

\begin{proof}
$\ref{enu:numericalone31}\Rightarrow \ref{enu:numericalone41}$ is
proved by Proposition~\ref{prop:indexonepeakpoint11}. Conversely,
suppose that we have $\ref{enu:numericalone41}$. By
Proposition~\ref{strongly}, we have only to show that $v(f)=\|f\|$
for every strongly norm-attaining element $f\in \mathcal{P}({\ }^k
X:X)$. Fix a strongly norm-attaining element $f$ in $\mathcal{P}({\
}^k X:X)$ with $\|f\| = \|f(x)\|$. Then the set $F= \{y^*\in
B_{X^*}: y^*f(x)= \|f(x)\|\}$ is a nonempty weak-$*$ compact convex
subset of $B_{X^*}$. By the Krein-Milman theorem, there is an
extreme point $x^*$ of $F$ and it is easy to see that $x^*$ is also
an extreme point of $B_{X^*}$. Hence $|x^*(x)|=1$ by the assumption
and $v(f)\ge |x^*f(x)|= \|f(x)\|=\|f\|$. This proves
$\ref{enu:numericalone41}\Rightarrow \ref{enu:numericalone31}$.

Suppose that $X$ is separable. Because $w^*exp(B_{X^*})\subset
ext(B_{X^*})$, the ``only if" part is clear by
Proposition~\ref{prop:indexonepeakpoint11}. So we need prove the
sufficiency. By Proposition~\ref{strongly}, it is enough to show
that $v(f)=\|f\|$ for every strongly norm-attaining element $f\in
\mathcal{P}({\ }^k X:X)$. Fix a strongly norm-attaining element $f$
in $\mathcal{P}({\ }^k X:X)$ with $\|f\| = \|f(x)\|$. Since $X$ is
separable, the set of smooth points in $S_X$ is dense. Given
$\epsilon>0$, choose a smooth point $y\in S_X$ with
\[\left\|y-\frac{f(x)}{\|f\|}\right\|\le \frac{\epsilon}{\|f\|}.\]
Then there is $x^*\in w^*exp(B_{X^*})$ such that $x^*(y)=1$ and
$|x^*f(x)| \ge \|f\| -\epsilon$. So $|x^*(x)|=1$ by assumption.
Hence $v(f)\ge |x^*f(x)| \ge \|f\| -\epsilon$. Since $\epsilon>0$ is
arbitrary, we get $v(f)=\|f\|$. This completes the proof.
\end{proof}

Now we shall give a partial answer to Problem~44 in \cite{KMP},
which ask if the only real Banach space $X$ with $n^{(2)}(X)=1$ is
$X=\mathbb{R}$.

\begin{theorem}\label{thm2}
Suppose that $X$ is a real nontrivial finite dimensional Banach
space. Then $\tilde\rho \mathcal{P}({\ }^2 X)=S_X.$ In addition, if
$n^{(2)}(X)=1$ then $X=\mathbb{R}$.
\end{theorem}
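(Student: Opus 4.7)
The plan is to prove the two assertions in turn; the inclusion $\tilde\rho\mathcal{P}({\ }^2 X)\subset S_X$ is immediate from $2$-homogeneity, so the heart of the matter is to show that for each $x_0\in S_X$ there is some $P\in\mathcal{P}({\ }^2 X)$ strongly attaining its norm at $x_0$.

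For this construction, I would fix $f\in S_{X^*}$ with $f(x_0)=1$ by Hahn--Banach, equip $X$ with an arbitrary inner product $\langle\cdot,\cdot\rangle$, let $\pi:X\to\mathrm{span}(x_0)^\perp$ be the corresponding orthogonal projection, and form the nonnegative quadratic form $h(x)=\langle\pi(x),\pi(x)\rangle$, which vanishes exactly on $\mathrm{span}(x_0)$. I then propose
\[ P(x)=f(x)^2-\epsilon\, h(x). \]
For $x\in B_X$ one has $P(x)\le f(x)^2\le 1$, with equality requiring both $f(x)=\pm 1$ and $h(x)=0$ and hence $x=\pm x_0$. For the lower bound, $P(x)\ge-\epsilon M$ where $M=\sup_{B_X}h<\infty$ by compactness; taking $\epsilon<1/M$ (the case $M=0$ is the trivial one-dimensional case, where $P=f^2$ already works) ensures $|P(x)|<1$ for every $x\in B_X\setminus\{\pm x_0\}$. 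Thus $\|P\|=P(x_0)=1$, and compactness of $B_X$ together with continuity of $P$ force any sequence $(x_n)\subset B_X$ with $|P(x_n)|\to 1$ to have a subsequence converging to $\pm x_0$; that is, $P$ strongly attains its norm at $x_0$.

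For the second assertion, Part~1 combined with Proposition~\ref{prop:indexonepeakpoint11} reduces the problem to showing that if $|x^*(x)|=1$ for every $x\in S_X$ and every $x^*\in ext(B_{X^*})$, then $\dim X=1$. Assuming $\dim X\ge 2$, the unit sphere $S_X$ is connected, so for any $x^*\in ext(B_{X^*})$ the continuous map $x^*|_{S_X}:S_X\to\{-1,1\}$ must be constant; but evaluating on the antipodal pair $x,-x\in S_X$ gives $x^*(x)=-x^*(x)$, hence $x^*(x)=0$, contradicting $|x^*(x)|=1$. The main obstacle is the construction in Part~1: squaring a linear functional alone does not suffice, because the face $\{f=1\}\cap B_X$ may well be higher dimensional and $f^2$ is constant along it; adding the strictly convex penalty $-\epsilon h$, which vanishes uniquely at $x_0$ within that face, is precisely what isolates $\pm x_0$ as the only maximizers of $|P|$ on $B_X$.
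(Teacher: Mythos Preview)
Your proof is correct and follows essentially the same strategy as the paper. In Part~1 the paper also builds $P(x)=x_1^*(x)^2-\sum_{k=2}^n\epsilon_k x_k^*(x)^2$ with $\sum\epsilon_k<1/2$, choosing $\{x_k^*\}_{k=2}^n$ a basis of the annihilator of $x_1$ in $X^*$; this is exactly your ``squared functional minus a nonnegative quadratic penalty vanishing only on $\mathrm{span}(x_0)$'', merely with the penalty written via a dual basis rather than an auxiliary inner product. In Part~2 the paper likewise invokes Proposition~\ref{prop:indexonepeakpoint11}, then simply notes that if $\dim X\ge 2$ one can pick $x\in S_X\cap\ker x^*$ to contradict $|x^*(x)|=1$; your connectedness argument reaches the same contradiction by a slightly different (but equally short) route. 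One small omission: you should remark that $ext(B_{X^*})\neq\emptyset$ (by Krein--Milman or Minkowski in finite dimensions) before fixing such an $x^*$, as the paper does.
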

\begin{proof} Suppose that $\dim X=n$ and identify $X$ with $X^{**}$. The first assertion is clear
when $n=1$. So we may assume that $n\ge 2$. Fix $x_1\in S_X$ and
choose $x_1^*\in S_{X^*}$ with $x_1^*(x_1)=1$. Then there is a
linearly independent subset $\{x_k^*\}_{k=2}^n$ in $B_{\ker x_1}$.
So $\{x_i^*\}_{i=1}^n$ is a basis of $X^*$.

Fix a positive sequence $\{\epsilon_k\}_{k=1}^n$ with $\sum_{k=1}^n
\epsilon_k <1/2$. Then define a 2-homogeneous polynomial \[P(x) =
x^*_1(x)^2 - \sum_{k=2}^n \epsilon_k x_k^*(x)^2.\] Notice that
$\|P\|= \|P(x_1)\|=1$. We shall show that $P$ strongly attains its
norm at $x_1$. Suppose that there is a sequence
$\{y_m\}_{m=1}^\infty$ in $B_X$ with $\lim_m |P(y_m)|=1$. This shows
that $\lim_m |x_1^*(y_m)|=1$ and $\lim_m |x_k^*(y_m)|=0$ for every
$k\ge 2$. Since $B_X$ is compact, we can choose a proper subsequence
of $\{y_m\}_m$ which converges to $\pm x_1$. This shows that $x_1$
is in $\tilde\rho \mathcal{P}({\ }^2 X)$. This proves the first
assertion.

In addition, if $n^{(2)}(X)=1$, then $|x^*(x)|=1$ for every $x\in
S_X=\tilde\rho\mathcal{P}({\ }^2X)$ and $x^*\in ext(B_{X^*})$ by
Proposition~\ref{prop:indexonepeakpoint11}. By the Minkowski
theorem, there is $x^*\in ext(B_{X^*})$. Then $|x^*(x)|=1$ for every
$x\in S_X$. If $\dim X\ge 2$, then there is $x\in S_X$ such that
$x^*(x)=0$. This is a contradiction.
\end{proof}

Now we denote by $n_u(X)$ the {\bf holomorphic numerical index}
defined by $n_u(X)= \inf \{ v(f): f\in A_u(B_X:X), \|f\|=1\}$.  The
proof of Proposition~\ref{prop:indexonepeakpoint11} gives the
following.
\begin{proposition}\label{prop:indexonepeakpoint1}
Let $X$ be a complex Banach space with $n_u(X)=1$. Then $|x^*(x)|=1$
for each $x\in \rho A_u(B_X)$ and $x^*\in ext(B_{X^*})$.
\end{proposition}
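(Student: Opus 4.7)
The plan is to adapt the argument of Proposition~\ref{prop:indexonepeakpoint11} essentially verbatim, with a scalar-valued strong peak function in $A_u(B_X)$ playing the role of the $k$-homogeneous polynomial $P$. Fix $x_0\in \rho A_u(B_X)$ and $x_0^*\in ext(B_{X^*})$; the goal is to show $|x_0^*(x_0)|=1$. By definition of $\rho A_u(B_X)$, I would choose $f\in A_u(B_X)$ with $\|f\|=1$ that is a strong peak function at $x_0$, so that given $\epsilon>0$ there is $\alpha>0$ such that $|f(y)|\ge 1-\alpha$ forces $\|x_0-y\|\le \epsilon$.

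Next I would run the Bauer-type separation exactly as in the earlier proof. Put $F=\{x^*\in B_{X^*}:|(x^*-x_0^*)(x_0)|\ge \epsilon\}$; since $x_0^*$ is an extreme point of $B_{X^*}$, the converse Krein-Milman theorem shows $x_0^*\notin \overline{co}^{\,*}(F)$, so there exist $y\in S_X$ and $\beta>0$ with ${\rm Re\ } x_0^*(y) > 1-\beta \ge {\rm Re\ } x^*(y)$ for every $x^*\in F$. The decisive step is to form $g(x)=f(x)\,y$, which lies in $A_u(B_X:X)$ with $\|g\|=1$; the hypothesis $n_u(X)=1$ then yields $(x,x^*)\in \Pi(X)$ with $|x^*g(x)|>1-\min\{\alpha,\beta\}$. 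Just as in Proposition~\ref{prop:indexonepeakpoint11}, this gives $\|x_0-x\|\le \epsilon$ (from $|f(x)|>1-\alpha$) and $|(\lambda_1 x^*-x_0^*)(x_0)|<\epsilon$ for some $\lambda_1\in S_\mathbb{C}$ (from ${\rm Re\ }(\lambda_1 x^*)(y)>1-\beta$). Two triangle inequalities combined with $x^*(x)=1$ then produce $\big|\,1-|x_0^*(x_0)|\,\big|\le 2\epsilon$, and since $\epsilon$ is arbitrary, $|x_0^*(x_0)|=1$.

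There is essentially no obstacle here beyond checking that $g=f(\cdot)\,y$ belongs to $A_u(B_X:X)$ with $\|g\|=1$, which is immediate because $f$ is uniformly continuous on $B_X$ and holomorphic on $B_X^\circ$ while multiplication by the constant vector $y\in S_X$ preserves both properties and scales the norm by $\|y\|=1$. The rest of the argument transports directly, with the hypothesis $n_u(X)=1$ substituting for $n^{(k)}(X)=1$; hence the author's remark that the proof of Proposition~\ref{prop:indexonepeakpoint11} gives this statement.
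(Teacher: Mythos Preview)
Your proposal is correct and is exactly the adaptation the paper intends: replace the strongly norm-attaining polynomial $P$ by a normalized strong peak function $f\in A_u(B_X)$ at $x_0$, form $g=f(\cdot)\,y\in A_u(B_X:X)$, and invoke $n_u(X)=1$ in place of $n^{(k)}(X)=1$. The only (harmless) difference from Proposition~\ref{prop:indexonepeakpoint11} is that the strong peak condition gives $\|x_0-x\|\le\epsilon$ directly, without the modulus-one scalar $\lambda_2$.
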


The proof of Theorem~\ref{prop:rnp1} gives the following if we use
Proposition~\ref{prop:indexonepeakpoint1} instead of
Proposition~\ref{prop:indexonepeakpoint11} and the fact that the set
of all strong peak functions in $A_u(B_X:X)$ is dense \cite{CLS}.
\begin{proposition}\label{prop:rnpa}
Suppose that $X$ is a nontrivial complex Banach space with the
Radon-Nikod\'ym property. The following are equivalent.
\begin{enumerate}
\item \label{enu:numericalone3} $n_u(X)=1$.

\item \label{enu:numericalone4} For each $x\in \rho A_u(B_X)$ and $x^*\in
ext(B_{X^*})$, we have $|x^*(x)|=1$.
\end{enumerate}
In addition, when $X$ is separable,  $n_u(X)=1$ if and only if
$|x^*(x)|=1$ for each $x\in \rho A_u(B_X)$ and $x^*\in
w^*exp(B_{X^*})$.
\end{proposition}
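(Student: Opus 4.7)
The plan is to run the same argument as in the proof of Theorem~\ref{prop:rnp1}, making two substitutions: polynomials are replaced by elements of $A_u(B_X:X)$, and the density of strongly norm-attaining polynomials (Proposition~\ref{strongly}) is replaced by the density of strong peak functions in $A_u(B_X:X)$ from~\cite{CLS} (this is where the Radon--Nikod\'ym property enters). The direction \ref{enu:numericalone3}$\Rightarrow$\ref{enu:numericalone4} is immediate: it is exactly Proposition~\ref{prop:indexonepeakpoint1}.

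For \ref{enu:numericalone4}$\Rightarrow$\ref{enu:numericalone3}, continuity of $v(\cdot)$ on $A_u(B_X:X)$ together with the cited density reduces the task to showing $v(f)=\|f\|$ whenever $f\in A_u(B_X:X)$ is a strong peak function, say at $x_0$. By the maximum modulus theorem, $x_0\in S_X$; by the identity $\rho A_u(B_X:X)=\rho A_u(B_X)$ recorded in the discussion preceding Proposition~2.1, we also have $x_0\in\rho A_u(B_X)$. The set $F=\{y^*\in B_{X^*}:y^*f(x_0)=\|f(x_0)\|\}$ is a nonempty weak-$*$ compact convex subset of $B_{X^*}$, so by Krein--Milman it admits an extreme point $x^*$, and a standard face argument shows $x^*\in ext(B_{X^*})$. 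The hypothesis then gives $|x^*(x_0)|=1$, so a suitable scalar multiple of $(x_0,x^*)$ lies in $\Pi(X)$, whence $v(f)\ge|x^*f(x_0)|=\|f\|$.

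For the separable case, the extra ingredient is Mazur's theorem, applied exactly as in Theorem~\ref{prop:rnp1}: given a strong peak function $f$ with $\|f\|=\|f(x_0)\|$ and $\epsilon>0$, pick a smooth point $y\in S_X$ within distance $\epsilon/\|f\|$ of $f(x_0)/\|f\|$. The unique supporting functional $x^*\in B_{X^*}$ at $y$ is then weak-$*$ exposed by $y$, and a direct triangle-inequality estimate yields $|x^*f(x_0)|\ge\|f\|-\epsilon$. By hypothesis $|x^*(x_0)|=1$, so $v(f)\ge\|f\|-\epsilon$, and letting $\epsilon\to 0$ completes the argument.

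I do not expect a serious obstacle beyond bookkeeping; the two points requiring some care are (i) invoking the \cite{CLS} density of strong peak functions in the vector-valued space $A_u(B_X:X)$ rather than in the scalar-valued $A_u(B_X)$, which is precisely where RNP is used, and (ii) being careful that the strong peak set coincides across scalar- and vector-valued spaces, so that the scalar-valued hypothesis ``$x\in\rho A_u(B_X)$'' in \ref{enu:numericalone4} actually applies to the peak point of our $X$-valued $f$.
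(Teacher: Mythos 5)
Your proposal is correct and follows essentially the same route as the paper, which explicitly says the result follows by rerunning the proof of Theorem~\ref{prop:rnp1} with Proposition~\ref{prop:indexonepeakpoint1} in place of Proposition~\ref{prop:indexonepeakpoint11} and with the density of strong peak functions in $A_u(B_X:X)$ from \cite{CLS} in place of Proposition~\ref{strongly}. Your two points of care (using the vector-valued density result, and the identification $\rho A_u(B_X:X)=\rho A_u(B_X)$) are exactly the substitutions the paper has in mind.
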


\begin{corollary}
Suppose that $X$ is a finite dimensional complex Banach space. The
following are equivalent.
\begin{enumerate}
\item \label{enu:numericalone1} $n_u(X)=1$.

\item \label{enu:numericalone2} For each $x\in ext_\mathbb{C}(B_X)$ and $x^*\in
ext(B_{X^*})$, we have $|x^*(x)|=1$.
\end{enumerate}
\end{corollary}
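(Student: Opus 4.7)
The plan is to deduce this corollary by a direct application of Proposition~\ref{prop:rnpa}, after identifying $\rho A_u(B_X)$ with $ext_\mathbb{C}(B_X)$ in the finite dimensional complex setting. The case $X=\{0\}$ is vacuous, so I may assume $X$ is nontrivial.

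First I would verify that the hypotheses of Proposition~\ref{prop:rnpa} are in force. Since $X$ is finite dimensional, $B_X$ is norm compact, so every nonempty bounded closed convex subset of $X$ is the closed convex hull of its strongly exposed points; hence $X$ has the Radon-Nikod\'ym property. Thus Proposition~\ref{prop:rnpa} applies and tells me that $n_u(X)=1$ if and only if $|x^*(x)|=1$ for every $x\in \rho A_u(B_X)$ and every $x^*\in ext(B_{X^*})$.

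Next I would replace $\rho A_u(B_X)$ by $ext_\mathbb{C}(B_X)$ using the remark following Proposition~2.1, quoted from \cite{CHL}, which says that for a nontrivial finite dimensional complex Banach space one has $\rho A(B_X)=ext_\mathbb{C}(B_X)$. Note that in finite dimensions $A_u(B_X)=A_b(B_X)$ because $B_X$ is compact, so both symbols give the same set of strong peak points. Substituting this identity into the conclusion of Proposition~\ref{prop:rnpa} yields the stated equivalence \ref{enu:numericalone1} $\Leftrightarrow$ \ref{enu:numericalone2}.

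The only substantive ingredient is the identification $\rho A_u(B_X) = ext_\mathbb{C}(B_X)$. The easy inclusion $\rho A_u(B_X)\subset ext_\mathbb{C}(B_X)$ is already recorded in Proposition~2.1. The reverse inclusion — producing, for each complex extreme point $x$ of $B_X$, a strong peak function in $A_u(B_X)$ peaking at $x$ — is the nontrivial input and would be the main obstacle in a self-contained argument; here, however, it is imported from \cite{CHL}, so no additional work is required inside the corollary itself.
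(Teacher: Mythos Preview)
Your argument is correct and matches the paper's own proof essentially line for line: invoke Proposition~\ref{prop:rnpa} (using that finite dimensional spaces have the Radon--Nikod\'ym property) and then replace $\rho A_u(B_X)$ by $ext_\mathbb{C}(B_X)$ via the result quoted from \cite{CHL}. The paper's proof is just a two-sentence version of what you wrote.
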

\begin{proof}
Notice that if $X$ is finite dimensional, $\rho A_u(B_X)=
ext_\mathbb{C}(B_X)$ (see \cite[Proposition~1.1]{CHL}). So
$\ref{enu:numericalone1}\iff\ref{enu:numericalone2}$ is shown by
Proposition~\ref{prop:rnpa}. This completes the proof.
\end{proof}

\subsection*{Acknowledgements} The author thanks Yun Sung Choi. He
introduced this topic to the author and kindly shared his ideas
which improved the previous version of this paper.

%
\end{document}